\newtheorem{theorem}{Theorem}
\newtheorem*{conjecture}{Conjecture}
\newtheorem{lemma}[theorem]{Lemma}
\newtheorem{remark}[theorem]{Remark}
\def\bal{\begin{aligned}}
\def\eal{\end{aligned}}
\def\be{\begin{equation}\label}
\def\ee{\end{equation}}
\def\bcs{\begin{cases}}
\def\ecs{\end{cases}}
\def\={\;=\;}
\def\+{\,+\,}
\def\-{\,-\,}
\def\Z{{\mathbb Z}}
\def\Q{{\mathbb Q}}
\def\R{{\mathbb R}}
\def\F{{\mathbb F}}
\def\P{{\mathbb P}}
\def\Fr{\mathcal{F}}
\def\supp{{\rm supp}}
\def\lb{\llbracket}
\def\rb{\rrbracket}
\def\ord{\mathrm{ord}}
\title{Higher Hasse--Witt matrices}
\author{Masha Vlasenko}
\thanks{This work was supported by the National Science Centre of Poland, grant UMO-2016/21/B/ST1/03084.}
\email{masha.vlasenko@gmail.com}
\address{Institute of Mathematics of the Polish Academy of Sciences \\ \'{S}niadeckich 8, 00-656 Warsaw, Poland}
\begin{document}
\maketitle

\begin{abstract} We prove a number of $p$-adic congruences for the coefficients of powers of a multivariate polynomial  $f(x)$ with coefficients in a ring $R$ of characteristic zero. If the Hasse--Witt operation is invertible, our congruences yield $p$-adic limit formulas which conjecturally describe the Gauss--Manin connection and the Frobenius operator on the unit-root crystal attached to $f(x)$. As a second application, we associate with $f(x)$ formal group laws over $R$. Under certain assumptions these formal group laws are coordinalizations of the Artin--Mazur functors.
\end{abstract}


\section{Introduction and main results}\label{sec:intro}

Let $R$ be a commutative characteristic zero ring and $f \in R[x_1^{\pm 1},\ldots,x_n^{\pm 1}]$ be a Laurent polynomial in $n$ variables, which we will write as
\[
f (x) \= \sum_u \; a_u \, x^u\,, \quad a_u \in R\,,
\]
where the summation runs over a finite set of vectors $u \in \Z^n$. The \emph{support} of $f$ is the set of exponents of monomials in $f$, which we denote by $\supp(f) \= \{ u : a_u \ne 0\}$. The \emph{Newton polytope} $\Delta(f) \subset \R^n$ is the convex hull of $\supp(f)$. 

Consider the set of internal integral points of the Newton polytope 
\[
J \= \Delta(f)^o \,\cap\, \Z^n\,,
\]
where $\Delta(f)^o$ denotes the topological interior of $\Delta(f)$.  Let $g=\#J$ be the number of internal integral points in the Newton polytope, which we assume to be positive. Consider the following sequence of $g \times g$ matrices $\{ \beta_m; m \ge 0\}$ with entries in $R$ whose rows and columns are indexed by the elements of $J$:
\be{beta_mat}
( \beta_m )_{u,v \in J} \= \text{ the coefficient of } x^{m v - u} \text{ in } f(x)^{m-1}\,.
\ee
By convention, $\beta_1$ is the identity matrix. In this paper we show that matrices~\eqref{beta_mat} satisfy a number of $p$-adic congruences for each prime number $p$. 

\bigskip

Let us fix $p$ and restrict attention to the subsequence $\{ \alpha_s = \beta_{p^s}; s \ge 0\}$. The entries of these matrices are given by
\be{alpha_mat}
( \alpha_s )_{u,v \in J} \= \text{ the coefficient of } x^{p^s v - u} \text{ in } f(x)^{p^s-1}\,.
\ee
To state our first main result we recall that a \emph{Frobenius endomorphism} $\sigma: R \to R$ is a ring endomorphism which lifts the $p$th power endomorphism on $\bar R=R/pR$, that is 
\be{Frob}\sigma(r) \;\equiv\; r^p \mod p \quad \text{ for every } \quad r \in R\,.
\ee
For example, in the ring of integers $R=\Z$ the trivial endomorphism $\sigma(r)=r$ is a Frobenius endomorphism, and in the polynomial ring $R=\Z[t]$ one can take $(\sigma r)(t)= r(t^p)$. A \emph{derivation} $D : R \to R$ is an additive map satisfying $D(r_1 \cdot r_2) = D(r_1)r_2 + r_1 D(r_2)$. The ring of integers has no non-zero derivations, derivations of $\Z[t]$ are given by $D=r(t)\frac d{dt}$ with $r \in \Z[t]$. Below Frobenius endomorphisms and derivations of $R$ are applied to $g \times g$ matrices entry-wise. 


\bigskip

\begin{theorem}\label{alpha_congs}
\begin{itemize}
\item[(i)] For every Frobenius endomorphism $\sigma: R \to R$ and every $s \ge 1$
\[
\alpha_s \;\equiv\;  \alpha_1 \cdot \sigma(\alpha_1) \cdot \ldots \cdot \sigma^{s-1}(\alpha_1)    \mod p \,.
\]

\item[(ii)] Assume $\bar \alpha_1 = \alpha_1 \mod p$ is invertible over $\bar R = R/pR$. Then 
\[
\alpha_{s+1} \cdot \sigma(\alpha_{s})^{-1} \;\equiv\; \alpha_{s} \cdot \sigma(\alpha_{s-1})^{-1}  \mod \; p^s \, 
\]
for all $s \ge 1$.
\item[(iii)] Under the condition of~(ii), for any derivation $D: R \to R$ one has
\[
D(\sigma^m(\alpha_{s+1})) \cdot \sigma^m(\alpha_{s
+1})^{-1} \;\equiv\; D(\sigma^m(\alpha_{s})) \cdot \sigma^m(\alpha_{s})^{-1} \mod \, p^{s+m} \, 
\]
for all $s,m \ge 0$. 
\end{itemize}
\end{theorem}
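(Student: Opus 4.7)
The plan is to prove (i) by a direct expansion modulo $p$, and then to bootstrap (ii) and (iii) from a Dwork-type master congruence for $\beta_m$ in the parameter $m$. For part (i), I would factor
\[
f(x)^{p^s-1} \;=\; \prod_{k=0}^{s-1} f(x)^{(p-1)p^k}\,,
\]
and use the iterated Frobenius congruence $f(x)^{p^k} \equiv f^{\sigma^k}(x^{p^k}) \mod p$, obtained inductively from $\bigl(\sum a_u x^u\bigr)^p \equiv \sum \sigma(a_u) x^{pu} \mod p$. Extracting the coefficient of $x^{p^s v - u}$ from the resulting product reduces to summing contributions from decompositions
\[
p^s v - u \;=\; \sum_{k=0}^{s-1} p^k e_k\,, \qquad e_k \in \supp f(x)^{p-1}\,.
\]
Decompositions of the form $e_k = p w_{k+1} - w_k$ with $w_0 = u$, $w_s = v$ and $w_1, \ldots, w_{s-1} \in J$ yield exactly the $(u,v)$-entry of $\alpha_1 \cdot \sigma(\alpha_1) \cdots \sigma^{s-1}(\alpha_1)$, since the coefficient of $x^{p^k(pw_{k+1}-w_k)}$ in $f^{\sigma^k}(x^{p^k})^{p-1}$ is $\sigma^k((\alpha_1)_{w_k,w_{k+1}})$. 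The main technical step for (i) is a combinatorial lemma asserting that every valid decomposition arises uniquely in this form, i.e.\ that the intermediate vectors $w_k$ are forced to lie in $J$; this is where the interior hypothesis $u,v \in \Delta(f)^o$ plays a decisive role.

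Parts (ii) and (iii) lift the mod-$p$ identity from (i) to higher $p$-adic precision. The natural strategy is to establish a master congruence of Dwork type, controlling the $p$-adic continuity of matrices of the shape $\beta_{mp} \cdot \sigma(\beta_m)^{-1}$ as $m$ varies. Specializing to $m=p^s$ and $m=p^{s-1}$ would then yield (ii), with the invertibility hypothesis on $\bar\alpha_1$ ensuring that the inverses $\sigma^j(\alpha_k)^{-1}$ are well-defined modulo arbitrary powers of $p$ and that the corresponding telescoping manipulation is legitimate. Part (iii) would follow from (ii) by applying $D$ to both sides and invoking the Leibniz rule, using that $D$ commutes with the construction of $\beta_m$ because it acts entrywise on the coefficients of $f$; the extra factor $\sigma^m$ presents no additional difficulty, since $\sigma$ likewise commutes with extraction of coefficients.

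The main obstacle is the Dwork-type master congruence underlying (ii). Establishing it requires comparing $f(x)^{mp} = (f(x)^m)^p$ with $f^\sigma(x^p)^m$ beyond the first $p$-adic order, through a careful expansion of the defect $f(x)^p - f^\sigma(x^p) \in p R[x]$ and the resulting binomial-type series. This has to be combined with the interior-lattice-point combinatorics from (i) to guarantee that the intermediate vectors remain in $J$ and that the inductive hypothesis propagates through the matrix product at each step of the $p$-adic refinement. Once the master congruence is in place, the extraction of (ii) and (iii) is a routine bookkeeping exercise with Frobenius and derivations.
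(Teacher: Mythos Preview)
Your outline for part~(i) is correct and matches the paper's argument in spirit: the paper packages the same computation via an auxiliary identity $\alpha_s \equiv \gamma_1 \cdot \sigma(\alpha_{s-1}) \bmod p$ with $\gamma_1 = \alpha_1$, but the underlying combinatorics (reducing the coefficient of $x^{p^s v - u}$ to a sum over chains $u=w_0,w_1,\dots,w_s=v$ forced into $J$ by the interior hypothesis) is exactly what you describe.

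For parts~(ii) and~(iii), however, there are genuine gaps. The phrase ``Dwork-type master congruence'' does not name the actual mechanism. The paper's key structural step is to introduce, for each $s \ge 1$, a Laurent polynomial $\delta_s(f)$ defined recursively by
\[
f^{p^s-1} \;=\; \sum_{i=1}^{s} \delta_i(f)\cdot \sigma^i\bigl(f^{p^{s-i}-1}\bigr),
\]
and to show by elementary induction that $\delta_s(f) \in p^{s-1} R'$ and $\Delta(\delta_s(f)) \subseteq (p^s-1)\Delta(f)$. Extracting coefficients (exactly as in your part~(i) argument) then yields an \emph{exact} matrix recursion
\[
\alpha_s \;=\; \gamma_1 \cdot \sigma(\alpha_{s-1}) + \gamma_2 \cdot \sigma^2(\alpha_{s-2}) + \cdots + \gamma_s, \qquad \gamma_i \equiv 0 \bmod p^{\,i-1}.
\]
Part~(ii) is then proved by induction on $s$: one substitutes this recursion into both $\alpha_{s+1}\sigma(\alpha_s)^{-1}$ and $\alpha_s\sigma(\alpha_{s-1})^{-1}$ and compares term by term, the divisibility $\gamma_i \in p^{i-1}$ being precisely what makes the bookkeeping close up. Your proposal to expand only the single defect $f^p - f^{\sigma}(x^p) \in pR'$ and run a binomial-type series sees the first-order correction but not the hierarchy $\delta_2,\delta_3,\dots$ that carries the higher $p$-adic information; this hierarchy is the missing idea in your sketch of~(ii).

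Your treatment of~(iii) contains an actual error. You write that the extra factor $\sigma^m$ ``presents no additional difficulty, since $\sigma$ likewise commutes with extraction of coefficients''. On the contrary, $\sigma^m$ is exactly what produces the strengthened modulus $p^{s+m}$: one needs the lemma that $D(\sigma^m(a)) \in p^m R$ for all $a \in R$ (proved by writing $\sigma(a)=a^p+pb$ and inducting on $m$). Simply applying $D$ to the congruence in~(ii) and invoking Leibniz yields only a congruence modulo $p^s$, not $p^{s+m}$. The paper instead runs a separate induction on $s$, again feeding the $\gamma$-recursion into $D(\sigma^m(\alpha_s))\sigma^m(\alpha_s)^{-1}$ and pairing the two divisibilities $\gamma_i \in p^{i-1}$ and $D\circ\sigma^{m}\subset p^m R$ term by term. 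Without both of these ingredients---the $\gamma$-recursion and the $D\circ\sigma^m$ lemma---your argument for~(iii) does not reach the stated precision.
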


\bigskip

An important role in the theorem is played by the matrix 
\be{HW}
( \bar{\alpha}_1 )_{u,v \in J} \= \text{ the coefficient of } x^{p v - u} \text{ in } \bar{f}(x)^{p-1}\,,
\ee
the \emph{Hasse--Witt matrix} of the polynomial $\overline f \= f \mod p  \in \bar{R}[x_1^{\pm 1},\ldots,x_n^{\pm 1}]$. Denote by $\bar{\sigma}(r)=r^p$ the $p$th power endomorphism of $\bar{R}$.  If $\bar{R}=\F_q$, a finite field with $q=p^k$ elements, the characteristic polynomial
\be{char-pol-HW}
\det\Bigl(1 \- T \cdot \bar\alpha_1 \cdot \bar\sigma(\bar\alpha_1)\cdot \ldots \cdot\bar\sigma^{k-1}(\bar\alpha_1) \Bigr) \; \in \; \F_p[T]
\ee
is congruent modulo $p$ to a factor in the zeta function 
\[
\mathcal{Z}(X_{\bar f}/\F_q; T) \= \exp\Bigl( \sum_{m \ge 1} \# X_{\overline f}(\F_{q^m})\frac{T^m}{m} \Bigr)\,,
\]
which is well known to be a rational function of $T$. We should mention that~\eqref{HW} is a matrix of a $\bar\sigma$-linear operation (Hasse--Witt operation), which can be defined in geometric terms. The reader can find a detailed explanation of this for non-singular projective hypersurfaces in~\cite[\S 1]{MV16Creswick} and references there in. Similar results hold for toric hypersurfaces over finite fields. Here we only want to observe that due to $\bar\sigma$-linearity, the matrix of the $s$th iterate of the Hasse--Witt operation is given by $\bar\alpha_1 \cdot \bar\sigma(\bar\alpha_1) \cdot \ldots \cdot \bar\sigma^{s-1}(\bar\alpha_1)$, which is equal to $\alpha_s \mod p$ due to Theorem~\ref{alpha_congs}~(i). By this reason we called $\alpha_s$ \emph{higher Hasse--Witt matrices}. For a generic non-singular projective hypersurface over a finite field, the Hasse--Witt matrix~\eqref{HW} is known to be invertible (see~\cite{Mill76,Kob75} or \cite{AS16} for an elementary proof). 


\bigskip

When the Hasse--Witt matrix is invertible, Theorem~\ref{alpha_congs} implies existence of $p$-adic limits 
\be{lim_Frob}
F_{\sigma} \= \underset{s \to \infty} \lim \alpha_{s+1} \cdot \sigma(\alpha_{s})^{-1}
\ee
and
\be{lim_D}
N_D \= - \underset{s \to \infty} \lim D(\alpha_{s}) \cdot \alpha_{s}^{-1}\,.
\ee
These are $g \times g$ matrices with entries in the $p$-adic closure $\widehat R_p =\underset{\leftarrow}{\lim} R/p^s R$. Note that due to~(i) 
\[
F_{\sigma} \mod p \= \bar \alpha_1\,.
\]
Let us give an example. Consider $f(x)=1+ax+bx^2+x^3$ over $R=\Z[a,b]$. Matrices~\eqref{beta_mat} are given by
\[
\beta_m \= \text{ coefficients of } \begin{pmatrix} x^{m-1} & x^{2m-1} \\ x^{m-2} & x^{2m-2} \end{pmatrix} \text{ in } \bigl( 1 \+ ax \+ b x^2 \+ x^3 \bigr)^{m-1}\,,
\]
the first few being
\[
\beta_1\=\begin{pmatrix}1&0\\0&1\end{pmatrix}\,,\quad \beta_2\=\begin{pmatrix}a&1\\1&b\end{pmatrix}\,,\quad \beta_3\=\begin{pmatrix}a^2+2b&2b\\2a&2a+b^2\end{pmatrix}\,,\quad \ldots
\]
With the help of a computer one can easily check if $\bar\beta_p \mod p$ is invertible, which seems to be the case for $p \ne 3$. One has $\widehat R_p = \Z_p \{\{ a,b \}\}$, the ring of $p$-adically convergent power series in $a$ and $b$, and we recognise that
\be{ex-cubic}\bal
-\frac {d \beta_{p^s}}{da} \; \beta_{p^s}^{-1} \;\equiv\; \frac1{disc(f)} \begin{pmatrix}3 b + ab^2-4a^2 &  ab-9 \\2(3a-b^2)  & 2(3b-a^2)\end{pmatrix} \mod p^s\,, \\
-\frac {d \beta_{p^s}}{db} \; \beta_{p^s}^{-1} \;\equiv\; \frac1{disc(f)} \begin{pmatrix}2(3a-b^2) & 2(3b-a^2)\\ab-9  & 3 a + ba^2-4b^2\end{pmatrix}\mod p^s \,,\\
\eal\ee   
where 
\[
disc(f) \= -4(a^3+b^3) \+  a^2 b^2 \+ 18 \, ab \- 27
\]
is the discriminant of the cubic $f(x)$. What we see in the right-hand side of~\eqref{ex-cubic} are the matrices $N_{d/da}$ and $N_{d/db}$ respectively. We observe that they are independent of $p$ and have rational functions as their entries.

In fact, we can identify the matrices from~\eqref{ex-cubic} in the following way. A \emph{connection} $\nabla$ on an $R$-module $M$ is a rule that gives for every derivation $D: R \to R$ an additive map $\nabla_D: M \to M$ satisfying $\nabla_D(r m) = D(r) m + r \nabla_D (m)$ for any $r \in R$ and $m \in M$. A connection can be thought as a way to differentiate elements of $M$. There is a natural way to introduce a connection on the quotient
\[
H \= R[x] / \{f(x)=0 \} \= R 1 \oplus R x \oplus R x^2\,. 
\]
Namely, to differentiate in $a$ one thinks that the other variable is a constant and applies the usual rules of differential calculus to the equation $f(x)=0$:
\[
0 \= d f(x) \= f'(x) dx + x da \quad\Rightarrow\quad \frac{dx}{da} \= - \frac{x}{f'(x)}\,,
\]
where $f'(x)=a+2bx+3x^2$ is the derivative in $x$. Next, we need to do reduction modulo $f(x)=0$ to express the result of differentiation in the basis $1,x,x^2$. Using polynomials $A,B \in R[x]$ such that $A(x) f(x) + B(x) f'(x) = disc(f)$, one finds that
\[
\nabla_{\frac d{da}}(x^m) \= m x^{m-1} \frac{dx}{da} \= - \frac{m x^m}{f'(x)} \;\equiv\; - \frac{m x^m B(x)}{disc(f)} \;\equiv\; - \frac{m}{disc(f)} \cdot \Bigl(\; x^m B(x) \mod f(x) \;\Bigr)\,.
\]
This computation makes sense if we work over the ring $R=\Z[a,b,\frac1{disc(f)}]$, which we assume we do from now on. We find that 
\[\bal
&\nabla_{\frac d{da}}(x) \= \frac{dx}{da} \= - \frac{x}{f'(x)} \;\equiv\; \frac1{disc(f)} \Bigl((2 b^2-6a)+(ab^2-3b-2a^2)x + (ab-9)x^2 \Bigr)\,,\\
&\nabla_{\frac d{da}}(x^2) \= \frac{d(x^2)}{da} \= 2 x \frac{dx}{da} \= - \frac{2x^2}{f'(x)} \;\equiv\; \frac1{disc(f)} \Bigl(
(-2 ab+18)+(4b^2-2a^2b+6a)x \\
&\hskip10cm  + (12b-4a^2)x^2 \Bigr)\,,
\eal\]
and similarly for the other variable:
\[\bal
0 \= &d f(x) \= f'(x) dx + x^2 db \;\Rightarrow\;\\
&\nabla_{\frac d{db}}(x) \= \frac{dx}{db} \= - \frac{x^2}{f'(x)} \;\equiv\; \frac1{disc(f)} \Bigl((9-ab)+(2b^2-a^2b+3a)x + (6b-2a^2)x^2 \Bigr)\,,\\
&\nabla_{\frac d{db}}(x^2) \= \frac{d(x^2)}{db} \= 2 x \frac{dx}{db} \= - \frac{2x^3}{f'(x)} \;\equiv\; \frac1{disc(f)} \Bigl(
(4a^2-12b)+(-14ab+4a^3+18)x \\
& \hskip10cm + (-8b^2+2a^2b+6a)x^2 \Bigr)\,.
\eal\]
Element $1 \in H$ is \emph{horizontal}, that is  $\nabla_D(1)=0$ for any derivation $D$.  Connection rules then imply that the submodule $R 1 \subset H$ is preserved by $\nabla$, and therefore $\nabla$ descends to the rank~2 quotient 
\[
H' \= H/R1 \;\cong\; R x \oplus R x^2\,,
\]
where it is represented by the matrices 
\be{ex-cubic-1}\bal
(\nabla_{\frac d{da}}(x), \nabla_{\frac d{da}}(x^2)) &\= (x,x^2) \frac1{disc(f)} \begin{pmatrix} ab^2-3b-2a^2&4b^2-2a^2b+6a\\ab-9 & 12b-4a^2 \end{pmatrix}\,,\\
(\nabla_{\frac d{db}}(x), \nabla_{\frac d{db}}(x^2)) &\= (x,x^2) \frac1{disc(f)} \begin{pmatrix} 2b^2-a^2b+3a&-14ab+4a^3+18\\6b-2a^2 & -8b^2+2a^2b+6a \end{pmatrix}\,.
\eal\ee
One can easily check that in the basis $(x^2+b x, x)$ this connection is given precisely by the matrices in~\eqref{ex-cubic}. Connection rules imply that if $N_D$ is the matrix of $\nabla_D$ in some basis $(e_i)$, then its matrix $N_D'$ in another basis $(e_i')=(e_i)W$ is given by $N_D'= W^{-1}N_DW + W^{-1}D(W)$. In our case one applies this transformation with $W=\begin{pmatrix} 0&1\\1 & -b \end{pmatrix}$ to pass from~\eqref{ex-cubic} to~\eqref{ex-cubic-1}.

The situation appears more subtle with matrices $F_{\sigma}$. Firstly, we don't recognise their entries as rational functions. Secondly, these matrices depend on $p$. However one can give a rule to produce them all from an object independent of $p$ and $\sigma$.  We find experimentally that for $p\ne 3$
\be{ex-cubic-Frob}
\beta_{p^s} \cdot \sigma(\beta_{p^{s-1}})^{-1} \equiv U \cdot  F^0 \cdot  \sigma(U)^{-1} \mod p^s 
\ee
where 
\[
U(a,b) \= \begin{pmatrix}1+\frac29 ab + \ldots &-\frac13 a+\frac19b^2+\ldots \\
-\frac13 b+\frac19a^2+\ldots & 1+\frac29 ab + \ldots \end{pmatrix} \quad\in\quad Mat_{2 \times 2} \Bigl( \Z[\frac13]\lb a,b\rb \Bigr)
\] is the solution to 
\[
D(U)+N_D U \= 0 \text{ for all }D \in Der(R) \,,\qquad U\Big|_{a=b=0}=\begin{pmatrix}1&0\\0 & 1\end{pmatrix}
\]
and 
$F^0=\begin{pmatrix}1&0\\0 & 1\end{pmatrix}$ or $\begin{pmatrix}0&-1\\-1 & 0\end{pmatrix}$  when $\Bigl(\frac{-3}{p}\Bigr)=1$ or $-1$ respectively. Columns of $U$ represent horizontal elements for $\nabla$ in $H'$ in the basis $(x^2+bx,x)$. Thinking of $H'=H'_f$ as being attached to $f$, the $\widehat R_p$-linear map 
\[
F_\sigma : H'_{\sigma(f)}\otimes_R \widehat R_p \to H'_f \otimes_R \widehat R_p \,,\quad \sigma(f)=1+\sigma(a)x+\sigma(b)x^2+x^3 
\]
given in the bases $(x^2+\sigma(b) x, x)$ and $(x^2+bx,x)$ by the matrix $F_\sigma=U F^0 \,\sigma(U)^{-1}$  from~\eqref{ex-cubic-Frob} is a homomorphism of connections for any constant matrix $F^0$. The particular $F^0$ that occurs in~\eqref{ex-cubic-Frob} is such that the zeta function of $x^2-x+1=0$ over $\F_p$
\[
\mathcal{Z}(\{x^2-x+1=0\}/\F_p; T) \= \bcs \exp\Bigl( \sum_{m \ge 1} \frac{2}{m} T^m \Bigr)\= \frac1{(1-T)^2}\,, \quad \Bigl(\frac{-3}{p}\Bigr)=1 \,, \\ \exp\Bigl( \sum_{m \ge 1} \frac{2}{2m} T^{2m} \Bigr)\= \frac1{1-T^2}\,, \quad  \Bigl(\frac{-3}{p}\Bigr)=-1 \ecs
\]
is equal to $\det(1-T\cdot F^0)^{-1}$. This zeta function can be thought as the dependent on $p$ factor in the zeta function of $x^3+1=0$, the specialisation of the equation $f(x)=0$ at $a=b=0$.      

\bigskip

Though the results and methods in this paper are essentially elementary, we will allow ourselves to state the following conjecture regarding the meaning of the limiting matrices $F_{\sigma}$ and $N_D$ in the general case. 

\begin{conjecture}
When the Hasse--Witt matrix~\eqref{HW} is invertible,  matrices~\eqref{lim_Frob} and~\eqref{lim_D} describe the Frobenius operator and the Gauss--Manin connection respectively on the unit-root crystal attached to $f$.   
\end{conjecture}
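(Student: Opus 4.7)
The strategy is to construct the unit-root $F$-crystal attached to $f$ explicitly via a Dwork-style $p$-adic cohomology of the toric hypersurface $X_f$, and then identify the matrix of Frobenius on it with the limit $F_\sigma$. Following the approach of Adolphson--Sperber and Katz, one builds a $p$-adic cohomology space $H_f$ for $X_f$ in which the internal integral points $u \in J$ provide a natural basis: each $u \in J$ represents a differential form with controlled pole along $X_f$ (as already visible in the example of the excerpt, where the quotient of $R[x]/(f)$ by the constants is the rank-$g$ piece carrying the Gauss--Manin connection). Under the ordinary assumption that $\bar\alpha_1$ is invertible, $H_f$ carries a slope-zero subobject of rank $g$.

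The first key step is to compute Dwork's Frobenius lift $\phi_\sigma$ on $H_f$ in this basis. An explicit series expansion, using the classical Dwork splitting function or its toric analogue, should give the matrix entry of $\phi_\sigma$ at $(u,v)$ as the coefficient of $x^{pv-u}$ in a convergent $p$-adic series built from $f^{p-1}$, whose $s$-fold iterate reduces to $\alpha_s$ modulo $p^s$. This geometrically explains the Dwork-style congruences of Theorem~\ref{alpha_congs} and ties the combinatorial matrices $\alpha_s$ directly to a geometric Frobenius.

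Next, invoking Katz's theorem on unit-root subcrystals, in the ordinary case there is a unique $\phi_\sigma$-stable direct summand $U_f \subset H_f$ on which $\phi_\sigma$ acts invertibly. Invertibility of $\bar\alpha_1$ drives a Newton-type iteration (a contraction governed by $\bar\alpha_1^{-1}$) producing a gauge matrix $U$ over $\widehat R_p$ that conjugates $\phi_\sigma|_{U_f}$ into a form compatible with the naive basis, giving $F_\sigma = U \, F^0 \, \sigma(U)^{-1}$ for a suitable constant $F^0$ on a chosen fiber. The Gauss--Manin side is then automatic: horizontality of $\phi_\sigma$ yields a compatibility $D(F_\sigma) + N_D \, F_\sigma = F_\sigma \, \sigma(N_D)$, which combined with Theorem~\ref{alpha_congs}~(iii) forces $N_D = -\lim_s D(\alpha_s)\alpha_s^{-1}$ to be the Gauss--Manin matrix on $U_f$.

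The main obstacle is establishing the geometric framework in the required generality. One must (i) construct a suitable rigid or Monsky--Washnitzer cohomology for the possibly singular and non-compact toric hypersurface $X_f$, together with an overconvergent Dwork splitting making $\phi_\sigma$ well-defined; (ii) prove a comparison with the crystalline cohomology whose unit-root subcrystal appears in the conjecture; and (iii) match the combinatorial basis $\{ x^u : u \in J \}$ with a basis of the corresponding $F$-stable subquotient, functorially in $f$ so that the family-wise Gauss--Manin matrix is correctly transported. Each step has been carried out in restricted settings (non-degenerate Laurent polynomials by Adolphson--Sperber, smooth projective hypersurfaces by Katz), and the conjecture amounts to a uniform comparison theorem linking these geometric frameworks with the purely combinatorial input provided by the matrices $\beta_m$.
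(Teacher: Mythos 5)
The statement you are addressing is stated in the paper as a \emph{conjecture}: the paper does not prove it, but only offers supporting evidence in Section~\ref{sec:ASD} (for $R=\Z_q$ and smooth projective hypersurfaces, Stienstra's generalized Atkin--Swinnerton-Dyer congruences show that the reciprocal characteristic polynomial of $\Phi=\lim_s \alpha_s\alpha_{s-k}^{-1}$ divides the Frobenius polynomial on middle crystalline cohomology), and it refers to Huang--Lian--Yau--Yu for an actual proof by Katz's expansion method. Your proposal follows the same general route the author herself announces for a future paper (Dwork cohomology, or Katz-style expansion coefficients), so the strategy is reasonable; but as written it is a research program, not a proof. Every load-bearing step is asserted rather than established: the explicit computation showing that the Dwork Frobenius matrix in the basis indexed by $J$ reduces to $\alpha_s$ modulo $p^s$ (this is exactly the hard analytic/cohomological content, requiring an overconvergent splitting and control of the reduction-to-the-basis process); the comparison isomorphism between that $p$-adic cohomology and the crystalline cohomology whose unit-root subcrystal appears in the conjecture; and the identification of the combinatorial basis $\{x^u: u\in J\}$ with a basis of the unit-root subquotient, functorially in the family. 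You yourself list these as ``main obstacles,'' which is an accurate self-assessment: without them nothing is proved, and they constitute essentially the entire content of the conjecture.

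Two more specific points. First, your closing argument for the connection side is circular: the relation $D(F_\sigma)+N_D F_\sigma=F_\sigma\,\sigma(N_D)$ (which, incidentally, is not quite the correct compatibility --- the derivation must be twisted through $\sigma$, since $D\circ\sigma\neq\sigma\circ D$ in general; cf.\ Lemma~\ref{D_Phi_cong}) is a consequence of $F_\sigma$ being a horizontal morphism, but it does not \emph{force} $-\lim_s D(\alpha_s)\alpha_s^{-1}$ to equal the Gauss--Manin matrix unless you have already identified the unit-root crystal and its basis, which is precisely what is to be shown; Theorem~\ref{alpha_congs}~(iii) only guarantees that the limit exists. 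Second, note that the conjecture is stated for arbitrary Laurent polynomials $f$ with invertible Hasse--Witt matrix, with no nondegeneracy hypothesis; restricting to the Adolphson--Sperber or Katz settings, as your framework implicitly does, would prove less than what is claimed, and bridging that generality gap is itself nontrivial.
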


What this conjecture means for $F_{\sigma}$ can be seen in elementary terms when $R=\Z_q$, $q=p^k$. As we mentioned earlier, \eqref{char-pol-HW} is congruent modulo $p$ to a factor in the zeta function $\mathcal{Z}(X_{\bar f}/\F_q; T)$. Now we claim this congruence lifts to
\be{zeta-factor-lift}
\det\Bigl(1 \- T \cdot F_{\sigma} \cdot \sigma(F_{\sigma}) \cdot \ldots \cdot \sigma^{k-1}(F_{\sigma}) \Bigr) \; \in \; \Z_p[T]
\ee
being an actual factor of $\mathcal{Z}(X_{\bar f}/\F_q; T)$. The reader will find some evidence for this claim in Section~\ref{sec:ASD}. Note that by construction the reciprocal roots of~\eqref{zeta-factor-lift} are $p$-adic units. We further claim that they are actually all non-trivial $p$-adic unit-roots. By non-trivial we mean that they are eigenvalues of the Frobenius action on the \emph{primitive} cohomology of the hypersurface $X_{\bar f}$.

To explain the statement of the conjecture in general, let $X_f=\{ f = 0 \}$ be the hypersurface of zeroes of $f$. When  $X_f$ is non-singular its de Rham cohomology $H_f=H^{\cdot}_{dR}(X_f)$ is an $R$-module equipped with the following structure. First, there is the Gauss--Manin connection $\nabla$ on $H_f$. This is the natural connection we considered in the above example, where the hypersurface was of dimension zero. In addition, for every Frobenius endomorphism $\sigma \in End(R)$ there is a homomorphism of connections $F_{\sigma}:H_{\sigma(f)}\otimes_R \widehat R_p \to H_f \otimes_R \widehat R_p$. This map comes via the comparison isomorphism of $H_f \otimes_R \widehat R_p$ and $p$-adic cohomology of $X_{\bar f}$. An $R$-module with such a structure is called a \emph{crystal} (see~\cite{Katz85}). Our conjecture claims that when $\det \bar\alpha_1$ is invertible then there is a rank $g$ free subquotient in $H_f\otimes_R \widehat R_p$ where both Frobenius structure and connection descend, and their matrices in a certain basis are given by~\eqref{lim_Frob} and~\eqref{lim_D} respectively. This subquotient can be characterized as the largest part of the primitive cohomology, where the Frobenius action is invertible.

We plan to write a paper devoted to the proof of the above conjecture for toric hypersurfaces by methods of Dwork cohomology. An explicit construction of the unit-root crystal for a projective hypersurface was given in~\cite{Katz85}. In fact, the main result~\cite[Theorem 6.2]{Katz85} states a transposed version of congruences~(ii) and~(iii) in Theorem~\ref{alpha_congs} for matrices composed of certain expansion coefficients of differential forms on $X_f$. Parallelism between our results and Katz's work was one of the motivations for stating the conjecture. We were very pleased to learn that recently An Huang, Bong Lian, Shing-Tung Yau and Chenglong Yu proved our conjecture by Katz's expansion method, providing the desired link between the two approaches, see \cite{HLYY18}.

\bigskip  

Our second main result states integrality of certain formal group laws associated to a Laurent polynomial $f \in R[x_1^{\pm 1},\ldots,x_n^{\pm 1}]$. Let $J$ be either the set $\Delta(f) \cap \Z^n$ of all integral points in the Newton polytope of $f$ or the subset of internal integral points $ \Delta(f)^\circ \cap \Z^n$ as above. Assume that $J$ is non-empty and let $g = \#J$. Consider the sequence of matrices $\beta_m \in {\rm Mat}_{g \times g}(R)$, $m \ge 1$ given by~\eqref{beta_mat} and define a $g$-tuple of formal powers series $l(\tau) = (l_u(\tau))_{u \in J}$ in $g$ variables $\tau=(\tau_v)_{v \in J}$ by formula
\[
l(\tau) \= \sum_{m=1}^{\infty} \frac1m \, \beta_{m} \, \tau^m\,.
\]   
Consider the $g$-dimensional formal group law 
\be{fgl-f}
G_f(\tau,\tau') = l^{-1}(l(\tau)+l(\tau'))
\ee
with coefficients in $R \otimes \Q$. (The reader unfamiliar with formal group laws can simply think of $G_f(\tau,\tau')$ as a formal power series in $2g$ variables.) Recall that $R$ is a characteristic zero ring, that is the natural map $R \to R \otimes \Q$ is an embedding.  Let $p$ be a prime number and $R_{(p)} = R \otimes \Z_{(p)} \subset R \otimes \Q$ be the subring formed by elements without $p$ in the denominator. Then

\begin{theorem}\label{FGL_theorem} If $R$ can be endowed with a $p$th power Frobenius endomorphism~\eqref{Frob}, then $G_f \in R_{(p)}\lb \tau,\tau'\rb$.
\end{theorem}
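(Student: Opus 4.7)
The natural strategy is to invoke Hazewinkel's functional equation lemma for multidimensional formal group laws. Treating $l(\tau) = \sum_{m \ge 1} m^{-1}\beta_m \tau^m$ as a column vector of power series with matrix coefficients, this lemma reduces integrality of $G_f = l^{-1}(l(\tau)+l(\tau'))$ over $R_{(p)}$ to exhibiting matrices $c_i \in \mathrm{Mat}_g(R)$ for which
\[
l(\tau) \;-\; \sum_{i \ge 1} \frac{c_i}{p} \, \sigma_*^i \, l(\tau^{p^i}) \;\in\; R_{(p)} \lb \tau \rb^g,
\]
where $\sigma_*$ acts on coefficients. Since $m^{-1} \in R_{(p)}$ whenever $\gcd(m,p)=1$, writing $m = p^s k$ with $\gcd(k,p)=1$ and comparing coefficients of $\tau^m$ transforms this into the family of $p$-adic congruences
\[
\beta_{p^s k} \;\equiv\; \sum_{i=1}^{s} p^{i-1}\, c_i\, \sigma^i\!\bigl(\beta_{p^{s-i}k}\bigr) \pmod{p^s}, \qquad s \ge 1,\; \gcd(k,p)=1.
\]

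The most economical choice is $c_1 = \alpha_1 = \beta_p$ and $c_i = 0$ for $i \ge 2$, which reduces the task to the single Dwork-type congruence
\[
\beta_{p^s k} \;\equiv\; \beta_p \cdot \sigma(\beta_{p^{s-1}k}) \pmod{p^s}, \qquad s \ge 1,\; \gcd(k,p)=1.
\]
This strengthens Theorem~\ref{alpha_congs}(i) in two directions at once: it is uniform in the $p$-prime part $k$ of $m$, and it improves the precision from mod $p$ to mod $p^{v_p(m)}$. Once it is in hand, Hazewinkel's lemma delivers the theorem immediately, and the same argument handles both choices of $J$ (all integral points or only interior ones), since the matrices $\beta_m$ are defined by the same recipe in either case.

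To prove the above congruence I would adapt the combinatorial machinery underlying Theorem~\ref{alpha_congs}. The entry $(\beta_{p^s k})_{u,v}$ is the coefficient of $x^{p^s k v - u}$ in $f^{p^s k - 1}$; factoring $f^{p^s k - 1} = f^{(p-1)p^{s-1}k} \cdot f^{p^{s-1}k - 1}$ and applying a sharpened Dwork--Dieudonn\'e-type identity of shape $f(x)^{p^t} \equiv \sigma^t(f)(x^{p^t}) \pmod{p^{t+1}}$ to the first factor should match the product $\beta_p \cdot \sigma(\beta_{p^{s-1}k})$ to the required precision, after an index juggle encoding the bijection between the contributing monomials. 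The main obstacle is the precision control: the mod $p$ statement is elementary, but extracting the full mod $p^s$ refinement calls for careful bookkeeping of the $p$-adic valuations of the multinomial coefficients that appear when expanding $f^{p^{s-1}k}$. I expect the cleanest route is to isolate a single master congruence on coefficients of $f^{m-1}$ (for general $m$) from which both Theorem~\ref{alpha_congs} and the congruence needed here drop out as specializations.
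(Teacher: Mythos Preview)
Your overall plan---reduce to Hazewinkel's functional equation lemma and then verify a matrix congruence for the coefficients $\beta_m$---is exactly the paper's strategy. The divergence, and the gap, is in your ``most economical choice'' $c_1=\beta_p$, $c_i=0$ for $i\ge 2$. The resulting single-term congruence
\[
\beta_{p^sk}\;\equiv\;\beta_p\cdot\sigma(\beta_{p^{s-1}k})\pmod{p^s}
\]
is \emph{false} in general once $s\ge 2$. Take the paper's own example $f=1+ax+bx^2+x^3$ over $R=\Z[a,b]$ with $p=2$ and $\sigma(a)=a^2$, $\sigma(b)=b^2$: one computes $\alpha_1=\beta_2=\bigl(\begin{smallmatrix}a&1\\1&b\end{smallmatrix}\bigr)$, $\alpha_2=\beta_4=\bigl(\begin{smallmatrix}a^3+6ab+3&3a+3b^2\\3a^2+3b&b^3+6ab+3\end{smallmatrix}\bigr)$, and
\[
\alpha_2-\alpha_1\,\sigma(\alpha_1)\;=\;2\begin{pmatrix}3ab+1&a+b^2\\a^2+b&3ab+1\end{pmatrix}\;\not\equiv\;0\pmod 4.
\]
Equivalently, in the notation of Lemma~\ref{mat_cong_lemma} one has $\alpha_2=\alpha_1\sigma(\alpha_1)+\gamma_2$ exactly, and $\gamma_2$ lies in $p\,\mathrm{Mat}_g(R)$ but \emph{not} in $p^2\,\mathrm{Mat}_g(R)$. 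Your proposed ``sharpened Dwork--Dieudonn\'e identity'' $f^{p^t}\equiv\sigma^t(f)\pmod{p^{t+1}}$ fails for the same reason; already $f=1+x$, $p=2$ gives $f^4-\sigma^2(f)=4x+6x^2+4x^3\not\equiv 0\pmod 4$. So the obstacle you flag at the end is not bookkeeping: the congruence you are aiming for simply does not hold.

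What the paper does instead is keep \emph{all} the higher correction terms. It takes $c_i=\mu_i:=p^{-(i-1)}\gamma_i$, which lie in $\mathrm{Mat}_g(R)$ by Lemma~\ref{mat_cong_lemma}(i), and then the functional-equation hypothesis becomes precisely
\[
\beta_{mp^s}\;\equiv\;\sum_{i=1}^{s}\gamma_i\,\sigma^i(\beta_{mp^{s-i}})\pmod{p^s},
\]
which is Lemma~\ref{mat_cong_lemma}(iii). That lemma in turn rests on the recursively defined elements $\delta_s(f)$ of Lemma~\ref{delta_lemma}, not on the single factor $f^{p-1}$. The ``master congruence on coefficients of $f^{m-1}$'' you predict in your last paragraph is exactly this, but it necessarily involves the whole sequence $\{\gamma_i\}$ rather than $\gamma_1$ alone.
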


Note that if $R$ (or perhaps, a larger ring) admits a $p$th power Frobenius endomorphism for every prime $p$ then Theorem~\ref{FGL_theorem} implies that $G_f \in R\lb \tau, \tau' \rb$ because the subring $\cap_p R_{(p)} \subset R \otimes \Q$ coincides with $R$. As it was mentioned earlier, $R=\Z$ and $R\=\Z[t]$ are examples of rings admitting a Frobenius endomorphism for every $p$.

In certain cases (e.g. when $f$ is a homogeneous polynomial of degree $d>n$, the projective hypersurface $X_f$ is non-singular and $J=\Delta(f)^\circ \cap \Z^n$) formal group law~\eqref{fgl-f} is known to be   a coordinalisation of the Artin--Mazur formal group, see~\cite{St87}. Integrality and relation to $p$-adic cohomology then follow from geometry, but in contrast to~\cite{St87} the methods in this paper are elementary. Thus our theorem has no assumptions on $f$ and gives new integral formal group laws, whose geometric role yet has to be investigated.   

\bigskip

The paper is organized as follows. In Section~\ref{sec:lemma} we prove a number of lemmas on $p$-adic congruences for the powers of a Laurent polynomial. They are then used in Sections~\ref{sec:proof} and~\ref{sec:FGL} to prove Theorems~\ref{alpha_congs} and~\ref{FGL_theorem} respectively. In Section~\ref{sec:ASD} we give some evidence to support the above conjecture.

\bigskip

The results in this paper were earlier announced in~\cite{MV16Creswick}.

\bigskip

{\bf Acknowledgements.} I would like to thank John Voight and Susanne M\"{u}ller for their remarks and questions on the earlier versions of this manuscript. I grateful to Jan Stienstra for numerous inspiring discussions and to Alan Adolphson and Steven Sperber for our correspondence which motivated essential improvements in this paper. At the end, I would like to mention that my recent collaboration with Frits Beukers finally brought an understanding of the cohomological origin for the congruences discussed here. I hope to write a text about it in the nearest future.    

\bigskip

\section{Lemmas on congruences for the powers of $f(x)$}\label{sec:lemma} 

Let $R$ be a commutative ring with~$1$ and $p$ be a prime number. Assume that $R$ is endowed with a $p$th power Frobenius endomorphism, that is we have a ring endomorphism $\sigma: R \to R$ such that $\sigma(a) \;\equiv\; a^p \mod p R$ for every $a \in R$.

\begin{lemma}\label{delta_lemma} For $a \in R$ we define a sequence of elements $\delta_s = \delta_s(a) \in R$, $s = 1,2,\ldots$ by the recursive formula
\[
a^{p^s-1} \= \delta_1(a) \cdot \sigma(a^{p^{s-1}-1}) \+ \delta_2(a) \cdot \sigma^2(a^{p^{s-2}-1}) \+ \ldots \+ \delta_s(a) \,.  
\]
Then
\begin{itemize}
\item[(i)] $\delta_{s}(a) \in p^{s-1} R$ for every $s \ge 1$;
\item[(ii)] $a^{m p^s -1} \- \sum_{i=1}^s \delta_i(a) \cdot \sigma^i(a^{m p^{s-i}-1}) \in p^s R$ for every $m, s \ge 1$.
\end{itemize}
\end{lemma}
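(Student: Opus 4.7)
The approach rests on a single Frobenius lift identity: for any $b \in R$ and any $t \ge 1$,
\[
b^{p^t} \;\equiv\; \sigma(b^{p^{t-1}}) \mod p^t.
\]
To establish this I will write $\sigma(b) = b^p + pc$ for some $c \in R$ (using the Frobenius property) and expand $(b^p+pc)^{p^{t-1}}$ by the binomial theorem: for each $k \ge 1$ the coefficient $\binom{p^{t-1}}{k} p^k$ has $p$-adic valuation $(t-1) - v_p(k) + k \ge t$ by Kummer's formula, so every such term lies in $p^t R$. Iterating and telescoping (applying $\sigma^{j-1}$ to the identity at exponent $t = s-j+1$ for $j = 1, \ldots, i-1$) yields the chained form
\[
\sigma(b^{p^{s-1}}) \;\equiv\; \sigma^i(b^{p^{s-i}}) \mod p^{s-i+1} \qquad (1 \le i \le s),
\]
in which a larger Frobenius power is traded against a weaker modulus.

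With this in hand I will prove (i) and (ii) jointly by induction on $s$. The base case $s=1$ is immediate from $\delta_1(a) = a^{p-1}$ together with $\sigma(a^{m-1}) \equiv a^{p(m-1)} \mod p$. For the inductive step I first deduce (i) at level $s$ from (ii) at level $s-1$: specializing the latter to $m=p$ turns $a^{mp^{s-1}-1}$ into $a^{p^s-1}$ and gives
\[
a^{p^s - 1} \;\equiv\; \sum_{i=1}^{s-1} \delta_i(a) \, \sigma^i(a^{p^{s-i}-1}) \mod p^{s-1},
\]
and comparison with the defining recursion for $\delta_s$ forces $\delta_s(a) \in p^{s-1} R$.

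For (ii) at level $s$ and arbitrary $m$, I will factor $a^{mp^s-1} = a^{p^s-1} \cdot a^{(m-1)p^s}$ and apply the Frobenius lift identity to $b = a^{m-1}$ with $t=s$ to replace the second factor by $\sigma(a^{(m-1)p^{s-1}})$ modulo $p^s$. Substituting the defining recursion for $a^{p^s-1}$ and using the chained congruence above to rewrite $\sigma(a^{(m-1)p^{s-1}})$ as $\sigma^i(a^{(m-1)p^{s-i}})$ within the $i$-th summand, each term collapses to $\delta_i(a) \, \sigma^i(a^{mp^{s-i}-1})$ modulo $p^{(i-1)+(s-i+1)} = p^s$, which is exactly what (ii) at level $s$ asserts.

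The main obstacle is this final valuation bookkeeping: the gain $p^{i-1}$ coming from $\delta_i$ must match the loss $p^{s-i+1}$ in the chained Frobenius congruence so that the error in every summand falls into $p^s R$, and it is the simultaneous induction on (i) and (ii) that makes both ingredients available at precisely the right level.
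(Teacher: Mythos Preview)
Your proof is correct and rests on the same underlying mechanism as the paper's: Frobenius lift congruences of the form $b^{p^t}\equiv\sigma(b^{p^{t-1}})\bmod p^t$ combined with the divisibility $\delta_i\in p^{i-1}R$. The organization differs slightly---the paper proves (i) first by its own induction and then (ii) using (i), whereas you run a joint induction and obtain (i) at level $s$ by specializing (ii) at level $s-1$ to $m=p$, which is a clean shortcut (indeed the paper's factoring-out of $a^{p^s-p^{s-1}}$ in the proof of (i) is exactly this specialization unpacked).
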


\begin{proof} For $s=1$ we have $\delta_1(a)=a^{p-1}$ and~(i) holds trivially. For higher $s$ we prove~(i) by induction. We have
\[\bal
\delta_s \= a^{p^s-1} \- \sum_{i=1}^{s-1} \delta_i \cdot \sigma^i(a^{p^{s-i}-1}) &\= a^{p^s-1} \- \sum_{i=1}^{s-1} \delta_i \cdot \sigma^i(a^{p^{s-1-i}-1}) \cdot \sigma^i(a^{p-1})^{p^{s-1-i}}\\
&\;\equiv\; a^{p^s-1} \- \sum_{i=1}^{s-1} \delta_i \cdot \sigma^i(a^{p^{s-1-i}-1}) \cdot a^{p^s-p^{s-1}} \mod p^{s-1} R\\
&\= \Bigl( a^{p^{s-1}-1} \- \sum_{i=1}^{s-1} \delta_i \cdot \sigma^i(a^{p^{s-1-i}-1}) \Bigr) \cdot a^{p^s-p^{s-1}} \= 0\,,
\eal\]
where the congruence in the middle row above holds for every $i$ for the following reason. We notice that $\sigma^i(a^{p-1}) \equiv a^{p^i(p-1)} \mod pR$, then we raise this congruence $s-1-i$ times to the power $p$ and get $\sigma^i(a^{p-1})^{p^{s-1-i}} \equiv a^{p^{s-1}(p-1)} \mod p^{s-i} R$. By inductional assumption $\delta_i \in p^{i-1}R$, and hence
\[
\delta_i \cdot \sigma^i(a^{p-1})^{p^{s-1-i}} \equiv \delta_i \cdot a^{p^s-p^{s-1})} \mod p^{s-1} R\,.
\] 

We prove~(ii) in a similar manner:
\[\bal
a^{m p^s -1} &\- \sum_{i=1}^s \delta_i(a) \cdot \sigma^i(a^{m p^{s-i}-1}) \= a^{m p^s -1} \- \sum_{i=1}^{s-1} \delta_i(a) \cdot \sigma^i(a^{m p^{s-i}-1}) \- \delta_s(a) \cdot \sigma^s(a^{m-1})\\
& \qquad\qquad\qquad\qquad\qquad\quad  \overset{(p^s)}\equiv\; a^{m p^s -1} \- \sum_{i=1}^{s-1} \delta_i(a) \cdot \sigma^i(a^{m p^{s-i}-1}) \- \delta_s(a) \cdot a^{(m-1)p^s} \\
(\quad\text{because }\; &\sigma^s(a^{m-1}) \equiv a^{(m-1)p^s} \mod pR \quad\text{ and }\; \delta_s(a) \in p^{s-1} R \text{ by (i)}\quad)\\ 
&\= a^{m p^s -1} \- \sum_{i=1}^{s-1} \delta_i(a) \cdot \sigma^i(a^{m p^{s-i}-1}) \- \Bigl(a^{p^s-1} \- \sum_{i=1}^{s-1} \delta_i(a) \cdot \sigma^i(a^{p^{s-i}-1})\Bigr) \cdot a^{(m-1)p^s} \\
&\= \sum_{i=1}^{s-1} \delta_i(a) \cdot \Bigl( \sigma^i(a^{p^{s-i}-1}) \cdot a^{(m-1)p^s} \- \sigma^i(a^{m p^{s-i}-1})\Bigr)\\ 
&\overset{(p^{s})}\equiv\;\sum_{i=1}^{s-1} \delta_i(a) \cdot \Bigl( \sigma^i(a^{p^{s-i}-1}) \cdot \sigma^i(a^{m-1})^{p^{s-i}} \- \sigma^i(a^{m p^{s-i}-1})\Bigr)\\
(\quad \sigma^i(a^{m-1}) & \equiv a^{(m-1)p^i} \mod pR \;\Rightarrow\; \sigma^i(a^{m-1})^{p^{s-i}} \equiv a^{(m-1)p^s} \mod p^{s-i+1}R \;\Rightarrow\; \\
& \delta_i(a) \cdot \sigma^i(a^{m-1})^{p^{s-i}} \equiv \delta_i(a) \cdot a^{(m-1)p^s} \mod p^{s}R \quad\text{  since }\; \delta_i(a) \in p^{i-1} R \quad) \\
&\= \sum_{i=1}^{s-1} \delta_i(a) \cdot \Bigl( \sigma^i(a^{p^{s-i}-1} \cdot a^{p^{s-i}(m-1)}) \- \sigma^i(a^{m p^{s-i}-1})\Bigr) \=0\,.
\eal\]   
\end{proof}

\bigskip

Let $R' = R[x_1^{\pm 1},\ldots,x_n^{\pm 1}]$ be the ring of Laurent polynomials in $n$ variables. We extend $\sigma$ to a Frobenius endomorphism of $R'$ by assigning $\sigma(x_i)=x_i^p$ for $1 \le i \le n$. For $f \in R'$ we denote the Newton polytope of $f$ by $\Delta(f) \subset \R^n$. Lemma~\ref{delta_lemma} can be applied in the ring $R'$ and we obtain a sequence of Laurent polynomials $\{ \delta_s(f) ; s \ge 1\}$. The following lemma gives an estimate for their Newton polytopes. 

\begin{lemma}\label{Newt_polytope_lemma} $\Delta(\delta_s(f)) \subseteq (p^s-1) \Delta(f)$ for $s \ge 1$.
\end{lemma}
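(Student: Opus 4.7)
The plan is induction on $s$, using the recursive definition of $\delta_s(f)$ rewritten as
$$\delta_s(f) \= f^{p^s-1} \- \sum_{i=1}^{s-1}\delta_i(f)\,\sigma^i(f^{p^{s-i}-1}).$$
The base case $s=1$ is immediate, since $\delta_1(f)=f^{p-1}$ and hence $\Delta(\delta_1(f))=(p-1)\Delta(f)$.

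For the inductive step I would combine three elementary facts about Newton polytopes of Laurent polynomials: (a) $\Delta(gh)\subseteq\Delta(g)+\Delta(h)$ (Minkowski sum of the factors' Newton polytopes); (b) $\Delta(g+h)$ is contained in the convex hull of $\Delta(g)\cup\Delta(h)$, so any convex set containing both is preserved under addition; and (c) since $\sigma$ sends $x_i\mapsto x_i^p$, it acts on exponent vectors by multiplication by $p$, hence $\Delta(\sigma^i(g))=p^i\Delta(g)$. Applied to $g=f^{p^{s-i}-1}$, (c) gives $\Delta(\sigma^i(f^{p^{s-i}-1}))=(p^s-p^i)\Delta(f)$.

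Using the inductive hypothesis $\Delta(\delta_i(f))\subseteq(p^i-1)\Delta(f)$ together with (a), the $i$-th summand satisfies
$$\Delta\bigl(\delta_i(f)\cdot\sigma^i(f^{p^{s-i}-1})\bigr)\;\subseteq\;(p^i-1)\Delta(f)\+(p^s-p^i)\Delta(f).$$
At this step I would invoke the identity $aK+bK=(a+b)K$, valid for any convex set $K$ and non-negative scalars $a,b$ (one inclusion is trivial; the other uses the convex combination with weight $a/(a+b)$). This collapses the right-hand side to $(p^s-1)\Delta(f)$. Since $\Delta(f^{p^s-1})=(p^s-1)\Delta(f)$ as well, fact (b) applied to the recursion for $\delta_s(f)$ yields $\Delta(\delta_s(f))\subseteq(p^s-1)\Delta(f)$, completing the induction.

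There is no real obstacle here: the argument is purely formal, relying on the convexity of $\Delta(f)$ and the compatibility of $\sigma$ with the $\Z^n$-grading by monomial exponent. The only ingredient worth singling out before running the induction is the additivity of positive scalings of a convex set under Minkowski sum, together with the observation that $\sigma$ acts on Newton polytopes by dilation.
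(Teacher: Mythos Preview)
Your proof is correct and follows essentially the same route as the paper's: induction on $s$ via the recursion for $\delta_s(f)$, bounding each summand's Newton polytope by $(p^i-1)\Delta(f)+p^i(p^{s-i}-1)\Delta(f)=(p^s-1)\Delta(f)$. The only cosmetic point is that in (c) you should write $\Delta(\sigma^i(g))\subseteq p^i\Delta(g)$ rather than equality, since $\sigma$ also acts on coefficients in $R$ and could in principle annihilate some; but only the inclusion is used, so nothing is affected.
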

\begin{proof}
The statement obviously holds for $s=1$ since $\delta_1(f)=f^{p-1}$. For higher $s$ it follows by induction because each term in the sum on the right in
\[
\delta_s(f) \= f^{p^s-1} \- \sum_{i=1}^{s-1} \delta_i(f) \cdot \sigma^i(f^{p^{s-i}-1})
\] 
has its Newton polytope inside $(p^s-1)\Delta(f)$. Indeed, this is obvious for $f^{p^s-1}$ and for each $1 \le i \le s-1$ one has $\Delta(\sigma^i(f^{p^{s-i}-1})) \subseteq p^i \cdot(p^{s-i}-1) \Delta(f)$ and 
\[
\Delta(\delta_i(f) \cdot \sigma^i(f^{p^{s-i}-1})) \subseteq \Bigl((p^i-1) \+ p^i \cdot(p^{s-i}-1) \Bigr)\Delta(f) \= (p^s-1) \Delta(f)\,. 
\] 
\end{proof}

We assume that the set of internal integral points $J = \Delta(f)^\circ \cap \Z^n$ is non-empty and let $g= \# J$. The endomorphism $\sigma \in End(R)$ naturally extends to an endomorphism of the ring of $g \times g$ matrices with entries in $R$  (we simply apply it to each matrix entry). We will denote this extension by the same letter $\sigma \in End\bigl({\rm Mat}_{g \times g}(R)\bigr)$. However it is not a Frobenius endomorphism any more: the property $\sigma (\alpha) \equiv \alpha^p \mod p$ will not hold in general for $\alpha \in  {\rm Mat}_{g \times g}(R)$ when $g > 1$.  

Recall that for $m \ge 1$ matrices 
\[
( \beta_m )_{u,v \in J} \= \text{ the coefficient of } x^{m v - u} \text{ in } f(x)^{m-1}
\]
were defined in~\eqref{beta_mat}. By convention, $\beta_1$ is the identity matrix. We also use the notation $\alpha_s \= \beta_{p^s}$, $s \ge 0$ as in~\eqref{alpha_mat}. 

\begin{lemma}\label{mat_cong_lemma} For $s \ge 1$ consider $g \times g$ matrices given by
\be{gamma_mat}
( \gamma_s )_{u,v \in J} \= \text{ the coefficient of } x^{p^s \, v - u} \text{ in } \delta_s(f)\,.
\ee
We have
\begin{itemize}
\item[(i)] $\gamma_s \in p^{s-1} {\rm Mat}_{g \times g}(R)$ for $s \ge 1$;
\item[(ii)] $\alpha_s \= \gamma_1 \cdot \sigma(\alpha_{s-1}) \+ \gamma_2 \cdot \sigma^2(\alpha_{s-2}) \+ \ldots \+ \gamma_{s-1} \cdot \sigma^{s-1}(\alpha_1) \+ \gamma_s$ for $s \ge 1$;
\item[(iii)] for $m,s \ge 1$
\[
\beta_{mp^s} \- \sum_{i=1}^s \gamma_i \cdot \sigma^i(\beta_{m p^{s-i}}) \quad \in \quad p^{s} {\rm Mat}_{g \times g}(R)\,.
\]
\end{itemize}
\end{lemma}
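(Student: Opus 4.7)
Proof plan. For part (i), Lemma~\ref{delta_lemma}(i) applied in the Laurent polynomial ring $R' = R[x_1^{\pm 1},\ldots,x_n^{\pm 1}]$ (with Frobenius extended by $\sigma(x_j) = x_j^p$) to $a = f$ yields $\delta_s(f) \in p^{s-1} R'$, so every coefficient of $\delta_s(f)$ lies in $p^{s-1} R$. Since the entries of $\gamma_s$ are by definition coefficients of $\delta_s(f)$, this gives the claim.

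For parts (ii) and (iii) the plan is uniform: for every $u,v \in J$ I extract the coefficient of $x^{mp^s v - u}$ from a suitable identity in $R'$ and read off the $(u,v)$-entry. For (ii) the defining recursion of $\delta_s$ is the exact identity
\[
f^{p^s - 1} \= \sum_{i=1}^{s-1} \delta_i(f) \cdot \sigma^i(f^{p^{s-i}-1}) \+ \delta_s(f)
\]
in $R'$, while for (iii) Lemma~\ref{delta_lemma}(ii) applied to $a = f$ provides
\[
f^{mp^s - 1} \- \sum_{i=1}^{s} \delta_i(f) \cdot \sigma^i(f^{mp^{s-i}-1}) \;\in\; p^s R'\,.
\]
In either case the coefficient of $x^{mp^s v - u}$ on the left equals $(\beta_{mp^s})_{u,v}$ by definition of $\beta_{mp^s}$.

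The essential step is then to identify the coefficient of $x^{mp^s v - u}$ in each product $\delta_i(f) \cdot \sigma^i(f^{mp^{s-i}-1})$ with the $(u,v)$-entry of $\gamma_i \cdot \sigma^i(\beta_{mp^{s-i}})$. Because $\sigma^i$ sends $x_j \mapsto x_j^{p^i}$, every exponent appearing in $\sigma^i(f^{mp^{s-i}-1})$ lies in $p^i \Z^n$, so the Cauchy product forces the splitting $mp^s v - u = (p^i v' - u) + p^i(m p^{s-i} v - v')$ for some $v' \in \Z^n$, and the coefficient becomes
\[
\sum_{v' \in \Z^n} \bigl(\text{coeff of } x^{p^i v' - u} \text{ in } \delta_i(f)\bigr) \cdot \sigma^i\bigl((\beta_{mp^{s-i}})_{v',v}\bigr)\,.
\]
(With the convention $\beta_1 = I$, the $i = s$ term collapses to $(\gamma_s)_{u,v}$.) By Lemma~\ref{Newt_polytope_lemma} the first factor vanishes unless $p^i v' - u \in (p^i - 1)\Delta(f)$, which forces
\[
p^i v' \;\in\; u + (p^i-1)\Delta(f) \;\subseteq\; \Delta(f)^\circ + (p^i-1)\Delta(f)\,.
\]
Since the Minkowski sum of an open set with any set is open, this last set is an open subset of $p^i \Delta(f)$ and therefore lies in $p^i \Delta(f)^\circ$. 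Thus $v' \in \Delta(f)^\circ$, and since $v' = (u+w)/p^i$ is automatically integral, $v' \in J$. The first factor then equals $(\gamma_i)_{u,v'}$ and summation over $v' \in J$ recovers $(\gamma_i \cdot \sigma^i(\beta_{mp^{s-i}}))_{u,v}$.

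I expect the main obstacle to be this geometric step: verifying that the Newton-polytope constraint of Lemma~\ref{Newt_polytope_lemma}, combined with $u \in J$ being an interior lattice point, forces the summation index $v'$ to be an \emph{integral interior} point of $\Delta(f)$, i.e.\ to lie in $J$. Once that is in place, parts (ii) and (iii) follow by term-by-term comparison with the two polynomial identities above, and part (i) is direct.
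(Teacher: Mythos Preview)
Your proposal is correct and follows essentially the same approach as the paper's proof: part~(i) is immediate from Lemma~\ref{delta_lemma}(i), and for (ii)--(iii) one extracts the coefficient of $x^{mp^s v - u}$ from the relevant polynomial identity, using Lemma~\ref{Newt_polytope_lemma} together with $u \in \Delta(f)^\circ$ to force the intermediate index into $J$. Your Minkowski-sum justification that $u + (p^i-1)\Delta(f)$ is an open subset of $p^i\Delta(f)$, hence contained in $p^i\Delta(f)^\circ$, is exactly the content of the paper's one-line claim ``since $u \in \Delta(f)^\circ$ it follows that $w+u \in p^i\Delta(f)^\circ$,'' just spelled out in more detail.
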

\begin{proof} (i) is clear since $\delta^s(f) \in p^{s-1} R'$ by~(i) in Lemma~\ref{delta_lemma}. To prove~(ii) consider the identity
\be{id1}
f^{p^s-1} \= \sum_{i=0}^{s} \delta_i(f) \cdot \sigma^i(f^{p^{s-i}-1})\,. 
\ee
Let $u,v \in J$. In order to compute the coefficient of $x^{p^s v - u}$ in $\delta_i(f) \cdot \sigma^i(f^{p^{s-i}-1})$ we are interested in pairs of vectors $w \in \supp(\delta_i(f))$ and $\tau \in \supp(f^{p^{s-i}-1})$ such that
\be{id2}
w + p^i \tau \= p^s v \- u\,.
\ee
By Lemma~\ref{Newt_polytope_lemma} we have $w \in (p^i-1) \,\Delta(f)$, and since $u \in \Delta(f)^\circ$ it follows that $w+u \in p^i \,\Delta(f)^\circ$. Moreover,~\eqref{id2} implies that $w + u \in p^i \Z^n$, and therefore
$\frac{1}{p^i} (w + u) \in \Delta(f)^o \cap \Z^n \= J$. On the other hand, for every $\mu \in J$ vectors
\[
w \= p^i\mu \- u \,,\qquad \tau \= p^{s-i} v \- \mu  
\]
satisfy~\eqref{id2}. It follows that the coefficient of $x^{p^s v - u}$ in $\delta_i(f) \cdot \sigma^i(f^{p^{s-i}-1})$ is equal to $\sum_{\mu \in J} (\gamma_i)_{u,\mu} \, \sigma^i (\alpha_{s-i})_{\mu, v}$, and~(ii) now follows from~\eqref{id1}.

We prove~(iii) in a similar vein. By~(ii) in Lemma~\ref{delta_lemma} we have
\be{id3}
f^{m p^s-1} \- \sum_{i=0}^{s} \delta_i(f) \cdot \sigma^i(f^{m p^{s-i}-1}) \quad \in p^s R'\,. 
\ee
In order to compute the coefficient of $x^{m p^s v - u}$ in $\delta_i(f) \cdot \sigma^i(f^{m p^{s-i}-1})$ we look at pairs of vectors $w \in \supp(\delta_i(f))$ and $\tau \in \supp(f^{m p^{s-i}-1})$ such that $
w + p^i \tau \= m p^s v \- u$. The same argument as above shows that $\mu = (w+u)/p^i \in \Delta(f)^\circ \cap \Z^n \= J$. With this $\mu$ we can rewrite $w = p^i \mu - u$, $\tau = m p^{s-i} v - \mu$, and~(iii) thus follows from~\eqref{id3}.
\end{proof}

\begin{remark}\label{bigger_mat_remark} It is clear from the proof of Lemma~\ref{mat_cong_lemma} that we could use a larger set $\widetilde{J} = \Delta(f) \cap \Z^n$ of all integral points in the Newton polytope of $f$ instead of the set of internal integral points $J  = \Delta(f)^\circ \cap \Z^n$. The statement of Lemma~\ref{mat_cong_lemma} then holds for the sequences of larger matrices $\{ \widetilde{\beta}_m, m \ge 1\}$, $\{ \widetilde{\alpha}_s, s \ge 0\}$ and $\{ \widetilde{\gamma}_s , s \ge 1\}$ defined by formulas~\eqref{beta_mat}, \eqref{alpha_mat} and~\eqref{gamma_mat} respectively with $\widetilde{J}$ instead of $J$.  
\end{remark}

\section{Proof of Theorem 1}\label{sec:proof}

Our main tools for the proof will be Lemma~\ref{mat_cong_lemma}, parts~(i) and~(ii). We will also need the following observation relating derivations and Frobenius endomorphisms. 

\begin{lemma}\label{D_Phi_cong} For any derivation $D: R \to R$ and any Frobenius endomorphism $\sigma:R\to R$ one has $D (\sigma^m(a)) \;\in\; p^m R$ for all $a \in R$ and $m \ge 1$.
\end{lemma}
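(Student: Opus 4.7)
The plan is to prove the statement by induction on $m$, using the defining property of a Frobenius endomorphism in the form: for every $a \in R$ there exists $b = b(a) \in R$ with
\[
\sigma(a) \= a^p \+ p\, b.
\]
For the base case $m=1$, differentiating this identity gives $D(\sigma(a)) = p a^{p-1} D(a) + p D(b) \in pR$ immediately from the Leibniz rule and the fact that $D$ is additive.

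For the inductive step, the temptation is to write $D(\sigma^m(a)) = D(\sigma(\sigma^{m-1}(a)))$ and apply the $m=1$ case. This fails to produce the required divisibility by $p^m$ because it only yields a factor of $p$; the remaining factor $D(\sigma^{m-1}(a))$ is only controlled to lie in $p^{m-1} R$ if the induction hypothesis is applicable, but one also picks up an uncontrolled term $p D(\phi(\sigma^{m-1}(a)))$ where $\phi(r) = (\sigma(r) - r^p)/p$ is not additive. The right move instead is to peel $\sigma$ off on the \emph{inside}: apply $\sigma^{m-1}$ (which is a ring homomorphism) to the identity $\sigma(a) = a^p + pb$, obtaining
\[
\sigma^m(a) \= \sigma^{m-1}(a)^p \+ p\, \sigma^{m-1}(b).
\]
Now differentiating with respect to $D$ and applying the Leibniz rule to the first summand gives
\[
D(\sigma^m(a)) \= p\, \sigma^{m-1}(a)^{p-1}\, D(\sigma^{m-1}(a)) \+ p\, D(\sigma^{m-1}(b)).
\]
By the inductive hypothesis applied to the elements $a$ and $b$ separately, both $D(\sigma^{m-1}(a))$ and $D(\sigma^{m-1}(b))$ lie in $p^{m-1} R$, so each of the two summands above lies in $p^m R$, finishing the induction.

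The main subtlety, and the only step that requires any thought, is choosing the correct direction in which to decompose $\sigma^m(a)$: one must use that $\sigma$ commutes with itself and is a ring homomorphism so that the ``correction term'' $pb$ propagates through the iteration as $p \sigma^{m-1}(b)$, whose derivative can be bounded by induction applied to $b \in R$. Once this is in place the rest is the Leibniz rule.
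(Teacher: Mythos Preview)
Your proof is correct and follows essentially the same approach as the paper: both argue by induction on $m$, writing $\sigma^m(a) = \sigma^{m-1}(a)^p + p\,\sigma^{m-1}(b)$ (where $\sigma(a) = a^p + pb$), applying the Leibniz rule, and invoking the inductive hypothesis on both $a$ and $b$. Your additional discussion of why peeling $\sigma$ off on the outside is less convenient is nice exposition but not a genuine departure from the paper's argument.
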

\begin{proof} Since $\sigma(a) = a^p + p b$ for some $b \in R$, then
\[
D(\sigma(a)) \= D(a^p) \+ p D(b) \= p (a^{p-1} D(a) \+ D(b)) \in p R\,,  
\]
which proves the statement for $m=1$. We will do induction on $m$. If the statement holds for $m-1$ then
\[\bal
D(\sigma^m(a)) \= D(\sigma^{m-1}(a^p + p b)) \= D\Bigl(\sigma^{m-1}(a)^p\Bigr) \+ p D(\sigma^{m-1}(b)) \\
\= p \, \sigma^{m-1}(a)^{p-1} D(\sigma^{m-1}(r)) \+ p \, D(\sigma^{m-1}(b)) \in p^m R
\eal\]
since both $D(\sigma^{m-1}(a))$ and $D(\sigma^{m-1}(b))$ belong to $p^{m-1}R$.
\end{proof}

\begin{proof}[Proof of Theorem~\ref{alpha_congs}] By (i)--(ii) in Lemma~\ref{mat_cong_lemma} we have $\alpha_s \equiv \gamma_1 \cdot \sigma(\alpha_{s-1}) \mod p$. Iteration yields
\[
\alpha_s \;\equiv\; \gamma_1 \cdot \sigma(\gamma_1) \cdot \ldots \cdot \sigma^{s-1}(\gamma_1) \mod p \,,
\] 
and part (i) follows immediately since $\alpha_1 = \gamma_1$.

We will prove (ii) by induction on $s$. We shall show that 
\[
\alpha_{s+1} \cdot  \sigma(\alpha_s)^{-1} \;\;\equiv\;\;\alpha_{s} \cdot \sigma(\alpha_{s-1})^{-1} \; \mod \; p^s\,.
\]
The case $s=1$ follows from part (i). Let us substitute the recursive expressions for $\alpha_s$ and $\alpha_{s+1}$ from~(ii) in Lemma~\ref{mat_cong_lemma} into the two sides of the desired congruence: 
\[\bal
&\alpha_{s+1} \cdot  \sigma(\alpha_s)^{-1} = \gamma_1 + \sum_{j=2}^{s+1} \gamma_j \cdot \sigma^j(\alpha_{s+1-j}) \cdot \sigma(\alpha_s)^{-1} \,,\\
&\alpha_{s} \cdot \sigma(\alpha_{s-1})^{-1} = \gamma_1 + \sum_{j=2}^{s} \gamma_j \cdot \sigma^j(\alpha_{s-j}) \cdot \sigma(\alpha_{s-1})^{-1}\,. \\
\eal\]
Since we want to compare these two expressions modulo $p^s$ and $\gamma_{s+1} \;\equiv\; 0 \mod p^s$, the last term in the upper sum can be ignored. For every $j=2,\ldots,s$ we use the inductional assumption as follows:
\[\bal
& \alpha_s \sigma(\alpha_{s-1})^{-1} \;\equiv\; \alpha_{s-1} \sigma(\alpha_{s-2})^{-1} \mod p^{s-1}\\
& \alpha_{s-1} \sigma(\alpha_{s-2})^{-1} \;\equiv\; \alpha_{s-2} \sigma(\alpha_{s-3})^{-1} \mod p^{s-2}\\
& \vdots \\
& \alpha_{s+2-j} \sigma(\alpha_{s+1-j})^{-1} \;\equiv\; \alpha_{s+1-j} \sigma(\alpha_{s-j})^{-1} \mod p^{s+1-j}\\
\eal\]
We then apply the respective power of $\sigma$ to each row and multiply these congruences out to get that modulo $p^{s+1-j}$
\[\bal
\alpha_s \sigma^{j-1}(\alpha_{s+1-j})^{-1} & \= \alpha_s \sigma(\alpha_{s-1})^{-1} \sigma(\alpha_{s-1}) \sigma^2(\alpha_{s-2})^{-1} \ldots \sigma^{j-1}(\alpha_{s+1-j})^{-1}\\
& \;\equiv\; \alpha_{s-1} \sigma(\alpha_{s-2})^{-1} \sigma(\alpha_{s-2}) \sigma^2(\alpha_{s-3})^{-1} \ldots \sigma^{j-1}(\alpha_{s-j})^{-1} \= \alpha_{s-1} \sigma^{j-1}(\alpha_{s-j})^{-1}\,.
\eal\]
By our assumption, determinants of these matrices are units in $\widehat R$. Hence we can invert them to get
\be{(ii)gen}
\sigma^{j-1}(\alpha_{s+1-j}) \alpha_s^{-1} \;\equiv\; \sigma^{j-1}(\alpha_{s-j})  \alpha_{s-1}^{-1} \mod p^{s+1-j}\,. 
\ee
Now we allpy $\sigma$ and multiply by $\gamma_j$. Since $\gamma_j \;\equiv\; 0 \mod p^{j-1}$ we get
\[
\gamma_j \, \sigma^j(\alpha_{s+1-j}) \sigma(\alpha_s)^{-1} \;\equiv\; \gamma_j \, \sigma^{j}(\alpha_{s-j})  \sigma(\alpha_{s-1})^{-1} \mod p^{s}\,. 
\]
Summation in $j$ gives the desired result~(ii).

For (iii) we shall show that
\[
D(\sigma^m(\alpha_{s+1})) \cdot \sigma^m(\alpha_{s+1})^{-1} \;\equiv\; D(\sigma^m(\alpha_{s})) \cdot \sigma^m(\alpha_{s})^{-1} \mod \, p^{s+m} \,
\]
for every $s,m\ge 0$. For $s=0$ the statement is true with any $m$: the right-hand side vanishes since $D(1)=0$  and the entries of the left-hand side belong to $p^m R$ by Lemma~\ref{D_Phi_cong}. We will now do induction on $s$. Substituting  the recursive expressions for $\alpha_s$ and $\alpha_{s+1}$ from~(ii) in Lemma~\ref{mat_cong_lemma} we can write 
\be{LHS_RHS_reps}\bal
D(\sigma^m(\alpha_s)) \sigma^m(\alpha_s)^{-1} &\= \sum_{i=1}^s D(\sigma^m(\gamma_i)) \, \sigma^m\Bigl( \sigma^{i}(\alpha_{s-i}) \, \alpha_s^{-1}\Bigr) \\
& \quad\quad \+ \sum_{i=1}^s \sigma^m(\gamma_i) \, D(\sigma^{m+i}(\alpha_{s-i})) \, \sigma^m(\alpha_s)^{-1}\,, \\
D(\sigma^m(\alpha_{s+1})) \sigma^m(\alpha_{s+1})^{-1} &\= \sum_{i=1}^{s+1} D(\sigma^m(\gamma_i)) \, \sigma^m\Bigl( \sigma^{i}(\alpha_{s+1-i}) \, \alpha_{s+1}^{-1}\Bigr) \\ 
& \quad\quad \+ \sum_{i=1}^{s+1} \sigma^m(\gamma_i) \, D(\sigma^{m+i}(\alpha_{s+1-i})) \, \sigma^m(\alpha_{s+1})^{-1}\,. 
\eal\ee 
Consider the terms with $i=s+1$ in the latter identity. In the first sum this term vanishes modulo $p^{s+m}$ because the entries of $\gamma_{s+1}$ are in $p^s R$, thus Lemma~\ref{D_Phi_cong} implies that the entries of $D(\sigma^m(\gamma_{s+1}))$ belong to $p^{s+m}R$. In the second sum this term vanishes since $D(\sigma^{m+s+1}(\alpha_0))=D(\alpha_0)=0$. 
Now take any $1 \le i \le s$. The respective terms in the first sums of both identities are equal modulo $p^{s+m}$ because
\[
D(\sigma^m(\gamma_i)) \equiv 0 \mod p^{m+i-1}
\]
by Lemmas~\ref{mat_cong_lemma}(i) and~\ref{D_Phi_cong} and
\[
\sigma^{i}(\alpha_{s-i}) \, \alpha_s^{-1} \equiv \sigma^{i}(\alpha_{s+1-i}) \, \alpha_{s+1}^{-1} \mod p^{s+1-i}
\]
as a consequence of part~(ii) of this theorem (e.g. take~\eqref{(ii)gen} with $i+1$ and $s+1$ instead of $j$ and $s$ respectively). It remains to compare the terms with index $i$ in the second sums of the two identities in~\eqref{LHS_RHS_reps}. We have
\[\bal
& D(\sigma^{m+i}(\alpha_{s+1-i})) \sigma^{m+i}(\alpha_{s+1-i})^{-1} \equiv D(\sigma^{m+i}(\alpha_{s-i})) \sigma^{m+i}(\alpha_{s-i})^{-1} \mod p^{s+m} \\
& \qquad\qquad\qquad\qquad\qquad\qquad (\text{ both } \; \equiv \; 0 \mod p^{m+i} \; \text{ by Lemma~\ref{D_Phi_cong}})\,, \\
& \sigma^{m+i}(\alpha_{s+1-i}) \sigma^m(\alpha_{s+1})^{-1} \equiv \sigma^{m+i}(\alpha_{s-i}) \sigma^m(\alpha_{s})^{-1} \mod p^{s+1-i}\,.\\
\eal\]
Here the first congruence follows from the inductional assumption and the last one follows from part~(ii) of this theorem. Myltiplying the above congruences we obtain
\[
D(\sigma^{m+i}(\alpha_{s+1-i})) \, \sigma^m(\alpha_{s+1})^{-1} \equiv D(\sigma^{m+i}(\alpha_{s-i})) \, \sigma^m(\alpha_{s})^{-1} \mod p^{s+m}\,.
\]
Multiplying both sides by $\sigma^{m}(\gamma_i)$ we see that the respective terms in~\eqref{LHS_RHS_reps} are congruent modulo $p^{s+m+i-1}$ (which is even better than we need whenever $i>1$). This acomplishes the proof of the inductional step.
\end{proof}

\section{Integrality of formal group laws attached to a Laurent polynomial}\label{sec:FGL}

In this section we prove Theorem~\ref{FGL_theorem}. The proof is based on Hazewinkel's functional equation lemma (\cite[\S 10.2]{Ha78}), the conditions of which are satisfied due to~(iii) in Lemma~\ref{mat_cong_lemma}.

\begin{proof}[Proof of Theorem~\ref{FGL_theorem}]
Recall that $R$ is a characteristic zero ring, that is the natural map $R \to R \otimes \Q$ is injective. We assume there is a $p$th power Frobenius endomorphism $\sigma: R \to R$, which we extend to $R \otimes \Q$ by linearity. We consider the case $J = \Delta(f)^\circ \cap \Z^n$ first. Let $g=\#J$ and $\{ \gamma_s; s \ge 1\}$ is the sequence of $g \times g$ matrices from Lemma~\ref{mat_cong_lemma}. Put $\mu_s = \frac1{p^{s-1}} \gamma_s$. Then $\mu_s \in {\rm Mat}_{g \times g}(R)$ by~(i) in Lemma~\ref{mat_cong_lemma}. We shall now check that each power series in the tuple 
\[
h(\tau) \= l(\tau) \- \frac1p \, \sum_{s=1}^{\infty} \mu_s \, (\sigma^s l)(\tau)  
\]   
has coefficients in $R_{(p)}$. Here $\sigma$ extends to $(R \otimes \Q) \lb \tau \rb$ by assigning $\sigma(\tau_u) = \tau_u^p$ for each $u \in J$, and it then acts on tuples of power series coordinate-wise. For any $k \ge 1$ we write $k = m p^r$ where $(m,p)=1$. Then
\[\bal
\text{ the coefficient of } \tau_v^k \text{ in } h_u(\tau) &\= \frac1{k} (\beta_{k})_{u,v} \- \frac1p \sum_{s=1}^{r} \sum_{w \in J} (\mu_s)_{u,w} \frac1{mp^{r-s}} (\beta_{m p^{r-s}})_{w,v}\\
&\= \frac1{m p^r} \Bigl( (\beta_{mp^r})_{u,v} \- \sum_{s=1}^{r} \sum_{w \in J} (\gamma_s)_{u,w} (\beta_{m p^{r-s}})_{w,v} \Bigr)\\
&\= \frac1{m p^r} \bigl(\beta_{mp^r} \- \sum_{s=1}^{r} \gamma_s \cdot \beta_{m p^{r-s}} \bigr)_{u,v} \; \in \; R_{(p)}\\
\eal\] 
by Lemma~\ref{mat_cong_lemma} (iii). Since $h (\tau) \in (R_{(p)} \lb \tau \rb )^g$   and the Jacobian matrix of $h(\tau)$ is the identity matrix, Hazewinkel's functional equation lemma~\cite[\S 10.2(i)]{Ha78} implies that $G_f(\tau,\tau') = l^{-1}(l(\tau)+l(\tau')) \in R_{(p)}\lb \tau, \tau'\rb$.

In the case $J = \Delta(f) \cap \Z^N$ the proof is exactly the same but using bigger matrices $\{ \widetilde \gamma_s ; s \ge 1 \}$, see Remark~\ref{bigger_mat_remark}. 
\end{proof}

\section{Some evidence for the conjecture}\label{sec:ASD}

In this section we assume that $R=\Z_q$, $q=p^k$ is the ring of integers of the unramified extension of $\Q_p$ of degree $k$. We then have $R/pR=\F_q$ and the Frobenius endomorphism $\sigma: R \to R$ satisfies $\sigma^k=Id$. We also assume that the polynomial $f \in R[x_1,\ldots,x_n]$ is homogeneous of degree $d \ge n$ and the hypersurface $X_f = \{f(x)=0\} \subset \P^{n-1}$ is non-singular. By~\cite[Theorem~1]{St87} the formal group law constructed in Theorem~\ref{FGL_theorem} using matrices $\{\beta_m; m \ge 0\}$ with $J=\Delta^\circ \cap \Z^n$ is a coordinalization of the Artin--Mazur formal group $H^{n-2}(X_f,\hat{\mathbb{G}}_{m,X_f})$. Using the relation between Artin--Mazur functors and crystalline cohomology Stienstra proved in~\cite{St87L} the following generalized version of the Atkin and Swinnerton-Dyer congruences. Suppose that the reduction $X_{\bar f}$ is a non-singular hypersurface over $\F_q$ and consider the reciprocal characteristic polynomial of the $q$th power Frobenius operator on the middle crystalline cohomology of $X_{\bar f}$
\be{Frob_poly}
\det (1 \- T \cdot \Fr_q \;|\; H^{N-2}_{crys}(X_{\bar f})) \= 1 \+ c_1 T \+ \ldots \+ c_r T^r \;\in\; \Z[T]\,. 
\ee
By~\cite[Theorem 0.1 and Remark 0.5]{St87L} there exists a constant $c$ such that  
\be{ASD}
\beta_{m} \+ c_1 \, \beta_{m/q} \+ c_2 \, \beta_{m/q^2} \+ \ldots \+ c_r \, \beta_{m/q^r} \equiv 0 \mod \; p^{\ord_p(m) - c}
\ee
whenever $\ord_p(m)$ is sufficiently large. Recall our notation $\alpha_s=\beta_{p^s}$. Combined with Theorem~\ref{alpha_congs}~(ii), congruences~\eqref{ASD} yield that the matrix
\[
\Phi \= F_{\sigma} \cdot \sigma(F_{\sigma}) \cdot \ldots \cdot \sigma^{k-1}(F_{\sigma})  \= \lim_{s \to \infty} \alpha_{s} \cdot \alpha_{s-k}^{-1}
\]
satisfies the equation 
\[
\Phi^r \+ c_1 \Phi^{r-1} \+ \ldots \+ c_r \= 0\,.
\]
It follows that the reciprocal characteristic polynomial of $\Phi$ divides~\eqref{Frob_poly} as we claimed in Section~\ref{sec:intro}, see~\eqref{zeta-factor-lift}. Note that under the conditions of Theorem~\ref{alpha_congs}~(ii) $\det(\Phi)$ is a $p$-adic unit. Therefore the reciprocal eigenvalues of $\Phi$ are $p$-adic unit eigenvalues of the Frobenius operator on the middle crystalline cohomology of the hypersurface $X_{\bar f}$.   

By Katz's congruence formula~\cite{Katz73} the number of $p$-adic unit Frobenius eigenvalues on the middle cohomology equals to the stable rank of the Hasse--Witt matrix. The Hasse--Witt matrix is invertible for a generic non-singular hypersurface (see~\cite{Mill76},\cite{Kob75},\cite{AS16}) and therefore the number of $p$-adic unit Frobenius eigenvalues is equal to the size of the matrix $\Phi$. The conjecture in Section~\ref{sec:intro} would imply that the multiplicities of eigenvalues should also coincide. 

\bigskip

Let us finish with a numerical example. Consider the genus two hyperelliptic curve $C$ given by \[
y^2 \= x^5 \+ 2 x^2 \+ x \+ 1\,.
\]
The two internal integral points in the Newton polytope of this equation are $(1,1)$ and $(2,1)$ and we have
\[
\beta_m \= \text{ the coefficients of } \begin{pmatrix} x^{m-1} y^{m-1} & x^{2 m-1} y^{m-1} \\ x^{m-2} y^{m-1} & x^{2m-2} y^{m-1}\end{pmatrix} \text{ in } (y^2 - x^5 - 2 x^2 - x - 1)^{m-1} \,.
\]
For example, with $p=11$ the higher Hasse--Witt matrices $\alpha_s=\beta_{11^s}$ satisfy  

\begin{center}
\begin{tabular}{c|ccc}
s & 1 & 2 & 3 \\
\hline
${\rm tr}(\alpha_{s} \cdot \alpha_{s-1}^{-1}) \mod 11^s$ &  $8 + O(11)$& $8 + 11 + O(11^2)$ & $8 + 11 + 11^2 + O(11^3)$ \\
${\rm det}(\alpha_{s} \cdot \alpha_{s-1}^{-1}) \mod 11^s$ &  $7 + O(11)$& $7 + 6 \cdot 11 + O(11^2)$ & $7 + 6 \cdot 11 + 3 \cdot 11^2 + O(11^3)$ \\
\end{tabular}
\end{center}
\bigskip

\noindent Using Kedlaya's algorithm we computed the reciprocal characteristic polynomial of the Frobenius on the first crystalline cohomology of the curve reduced modulo $11$:
\[\bal
\det \Bigl(1 - T \cdot \Fr_{11} \;|\; H^1_{crys}(\bar C_{11}) \Bigr) & \= 1 \+ 3 T \+ 18 T^2 \+ 33 T^3 \+ 11^2 T^4  \\
& \= (1 \+ 4 T \+ 11 T^2) (1 \- T \+ 11 T^2)\,.
\eal\]
The eigenvalues of the Frobenius operator are
\[
\lambda_{1,2} \= -2 \pm \sqrt{-7}\,, \qquad \lambda_{3,4} \= \frac{1 \pm \sqrt{-43}}2 \,.
\]
Both $-7$ and $-43$ are squares modulo $11$, and $11$-adic unit eigenvalues are
\[\bal
\lambda_1 &\= 7 \+ 2 \cdot 11 \+ 2 \cdot 11^2 \+ O(11^3)\\
\lambda_3 &\= 1 \+ 10 \cdot 11 \+ 9 \cdot 11^2 \+ O(11^3)\\
\eal\]
We see that in the above table traces and determinants converge to
\[\bal
\lambda_1 \+ \lambda_3 &\= 8 + 11 + 11^2 + O(11^3)\\
\lambda_1 \cdot \lambda_3 &\= 7 + 6 \cdot 11 + 3 \cdot 11^2 + O(11^3)\\
\eal\]
respectively.

\end{document}